\spnewtheorem*{theorem*}{Theorem}{\bf}{\it}
\DeclareMathOperator{\coker}{coker}
\DeclareMathOperator{\wt}{wt}
\newcommand{\Z}{\mathbb Z}
\renewcommand{\S}{\mathscr S}
\newcommand{\G}{\Gamma}
\newcommand{\sh}{\cellcolor{black!10}} 
\newcommand{\Q}{\mathbb Q}
\begin{document}

\title{The Sandpile Group of a Thick Cycle Graph
}
\subtitle{
	}


\author{Diane Christine Alar \and
	Jonathan Celaya \and \\ 
	Luis David Garc\'ia Puente \and
	Micah Henson \and
	Ashley K. Wheeler
}

\authorrunning{Alar, Celaya, Garc\'ia Puente, Henson, Wheeler} 

\institute{Diane Christine Alar \at 
	San Francisco State University \\
	Department of Mathematics \\
	Thornton Hall 937 \\
	San Francisco, CA 94132 \\
    	Tel.: (415) 338 2251 \\
    	\email{dalar@mail.sfsu.edu} 
\and Jonathan Celaya \at
	Rice University \\
	Department of Mathematics -- MS 136 \\
	P.O. Box 1892 \\
	Houston, TX 77005-1892 \\
	Tel.: (713) 348-4829 \\
	\email{jsc7@rice.edu}
\and Luis David Garc\'ia Puente \at   
	Sam Houston State University \\
	Department of Mathematics and Statistics \\
	Box 2206 \\
	Huntsville, TX 77341-2206 \\
	Tel.: (936) 294-1581 \\
	\email{lgarcia@shsu.edu}
\and Micah Henson \at
	University of Washington \\
	Department of Applied Mathematics \\
	Box 353925 \\
	Seattle, WA 98195-3925 \\
	Tel.: 404-270-5824\\
	\email{mhenson2@uw.edu}
\and Ashley K. Wheeler \at
	James Madison University \\
	Department of Mathematics and Statistics \\
	60 Bluestone Drive\\
	Harrisonburg, VA 22807 \\
	Tel.: 206-543-5493 \\
	\email{wheeleak@jmu.edu}
}

\date{Received: date / Accepted: date}

\maketitle

\begin{abstract}
The majority of graphs whose sandpile groups are known are either regular or simple.  We give an explicit formula for a family of non-regular multi-graphs called \emph{thick cycles}.  A thick cycle graph is a cycle where multi-edges are permitted.  Its sandpile group is the direct sum of cyclic groups of orders given by quotients of greatest common divisors of minors of its Laplacian matrix.  We show these greatest common divisors can be expressed in terms of monomials in the graph's edge multiplicities. 
	\keywords{sandpile group \and critical group \and Jacobian group \and thick cycles}
	\subclass{05Cxx
	}
\end{abstract}

\section{Introduction}
\label{intro}
The Abelian Sandpile Model was first conceived in 1987 by the physicists Bak, Tang, and Wiesenfeld 
\cite{bak+tang+wiesenfeld}, who developed a cellular automaton model for natural systems with a critical point as an attractor.  In 1990, Dhar \cite{dhar1990} generalized their model to run on a graph with a distinguished vertex called a sink.  The collection of critical stable configurations in the model form a group, the \emph{sandpile group} of a graph.  
%
The sandpile group is also called the \emph{critical group} (in the chip-firing game \cite{biggs1999,bjorner+lovasz,bjorner+lovasz+shor}), or, in other contexts, the \emph{Picard group} or the \emph{Jacobian group} (when regarding the graph as a discrete analogue of a Riemann surface \cite{baker+serguei}), the \emph{group of bicycles} (a way to count spanning trees \cite{berman}), the \emph{tree group} (also related to spanning trees \cite{biggs1997}), and the \emph{group of components} (on arithmetical graphs \cite{lorenzini1989}).  

Explicit formulae for the sandpile groups of several families of graphs are known.  These include complete graphs $K_n$ \cite{biggs1999,lorenzini91}, complete bipartite graphs $K_{m,n}$ \cite{lorenzini91}, complete multipartite graphs $K_{\vec n}$ \cite{jacobson+niedermaier+reiner}, cycles $C_n$ \cite{merris}, generalized de Bruijn graphs \cite{chan+hollmann+pasechnik}, line graphs of graphs \cite{berget+manion+maxwell+potechin+reiner}, M\"obius ladders $M_n$ \cite{chen+hou+woo,deryagina+mednykh}, regular trees \cite{toumpakari}, threshold graphs \cite{christianson+reiner}, square cycles $C^2_n$ \cite{chen+hou+wooSquareCycles}, twisted bracelets \cite{hou+shen}, and wheel graphs $W_n$ \cite{cori+rossin}. Sandpile groups of certain Cartesian products of graphs are also known \cite{chen+hou2008,chen+hou,hou+lei+woo,liang+pan+wang,shi+pan+wang,pan+wang,pan+wang+xu}. However, there are only a few families of non-simple graphs for which the sandpile groups are known; these include thick tree graphs \cite{chen+schedler}, wired tree graphs $\bar T_n$ \cite{levine}, and $(q,t)$-wheel graphs $W_k(q,t)$ \cite{musiker}.

We provide a general formula for the sandpile group of a family of non-regular multi-graphs called \emph{thick cycles} (or \emph{thick $n$-cycles}). This is the first known general formula for the sandpile group of any family of non-regular multi-graphs A thick cycle graph is a cycle with multi-edges.  Section~\ref{sec:preliminaries} of this paper consists of the necessary background information on sandpile groups.  In Section~\ref{sec:mainResults} we prove the following:  

\begin{theopargself}
%
\begin{theorem*}[\ref{thm:group}]
The sandpile group of a thick $n$-cycle $C_{\vec a}$ with multiplicity vector $\vec a=(a_1,\dots,a_n)$ is 
\[
\S(C_{\vec a}) \cong \Z_{g_1} \oplus \Z_{\frac{g_2}{g_1}} \oplus \cdots \oplus \Z_{\frac{g_{n-2}}{g_{n-3}}} \oplus \Z_{\frac{|\S(C_{\vec a})|}{g_{n-2}}},
\]
where $g_t = \gcd{(a_{i_1}\cdots a_{i_t}\mid 1\leq i_1<\cdots <i_t\leq n)}$ for $t=1,\dots,n-2$ ($\gcd{}$ denotes the greatest common divisor).
\end{theorem*}
\end{theopargself}
Theorem~\ref{thm:group} implies the sandpile groups of thick cycles are isomorphic when their multiplicity vectors' entries are permutations of each other.  In Section~\ref{sec:consequences} we list a brief proof of this corollary and a few more consequences.  An explicit formula for the sandpile group of a thick cycle simultaneously gives a formula for the sandpile group of its dual \cite{cori+rossin}.  Thus we have formulae for subdivided banana graphs and we recover and generalize the formula for book graphs given by Emig, et al. \cite{emig+herring+meza+neiuwoudt+gp}.  A formula for the sandpile group of a thick cycle also provides more insight into the study of bilinear pairings on graphs.  We suggest future directions of research to pursue in Section~\ref{sec:future}.    

\section{Preliminaries}
\label{sec:preliminaries}

\begin{definition}
A \emph{thick cycle} of order $n$ (or a \emph{thick $n$-cycle}), denoted $C_{\vec a}$, is a multi-graph consisting of an $n$-cycle with edge multiplicities given by the \emph{multiplicity vector} $\vec a = (a_{1},a_{2},\hdots,a_{n})$. 
\end{definition}

For convention, we label a thick cycle with vertices $v_1,\dots,v_n$ and multiplicity vector $\vec a=(a_1,\dots, a_n)$ such that $a_i$ is the multiplicity of the edge joining $v_i$ and $v_{i+1}$, indices modulo $n$.  Figure~\ref{fig:thickCycleExample} is an example of a thick $5$-cycle with multiplicity vector $\vec a=(3,2,4,2,3)$.    
%
\begin{figure*}
  \includegraphics[width=0.42\textwidth]{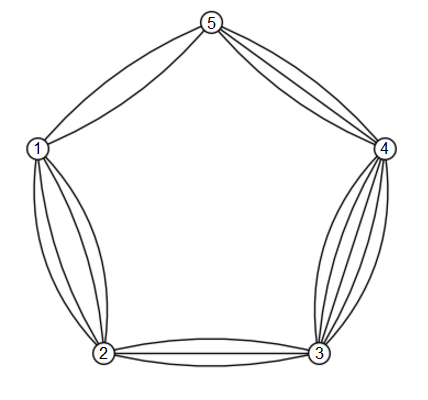}
\caption{Thick $5$-cycle with multiplicity vector $\vec a=(3,2,4,2,3)$.}
\label{fig:thickCycleExample} 
\end{figure*}
Thick cycles are undirected graphs. Recall that an undirected edge can be presented as two opposite directed edges. We note here that the broader sandpile group theory is on directed graphs; however, we shall only consider undirected graphs, in which case the theory is somewhat simplified. 





\subsection{The sandpile group}

The \emph{Laplacian} of a(n undirected) graph $\G$ is the matrix  
\begin{align*}
L=L(\G)=(L_{ij})=\begin{cases}
	-\wt(v_i,v_j) & i\neq j \\
	d_i & i=j,
	\end{cases}
\end{align*}
where $\wt(v_i,v_j)$ denotes the number of edges joining the vertices $v_i$ and $v_j$ and $d_i$ denotes the degree of the vertex $v_i$.  Each of the entries in a given row or sum to zero, and so $L$ has rank at most $n-1$.  Suppose we distinguish a vertex $s=v_i$ in $\G$.  We define the \emph{reduced transposed Laplacian} $\tilde{\Delta}_s$ as the submatrix of $L$ obtained by omitting the $i$th row and column (the Laplacian is symmetric for undirected graphs,but not necessarily for directed graphs).  
%
%
The sandpile group of $\G$ with distinguished vertex $s$ is given by 
\[
\S(\G,s)\cong \Z^{n-1}/\tilde{\Delta}_s\Z^{n-1}=\coker(\tilde{\Delta}_s).
\]  
Surprisingly, it turns out the sandpile group of an undirected graph is independent of the choice of distinguished vertex \cite{cori+rossin}.  Thus we simply write $\S(\G)=\S(\G,s)$.

\subsection{Known results used}

A well-known result in the theory of sandpile groups gives a way to determine the order of $\S(\G)$:    

\begin{theorem*}[Kirchhoff's Matrix-Tree Theorem]
Suppose $\G$ is an undirected graph.  Then for any distinguished vertex $s$ in $\G$, the number of spanning trees of $\G$ is 
\[
\kappa(\G)=|\det(\tilde{\Delta}_s)|,
\]
where $\tilde{\Delta}_s$ is the reduced transposed Laplacian for $\G$.
\end{theorem*}
It follows that the reduced transposed Laplacian has rank exactly $n-1$. \\

Now recall that an $n\times n$ matrix $M$ is {\it $\Z$-equivalent} to a matrix $M'$ if $M'$ can be obtained from $M$ by some sequence of the following row (or column) operations: (1) adding an integer multiple of one row (column) to another, (2) multiplying a row (column) by $-1$, (3) deleting a column (not row) of 0s, or (4) deleting row $i$ and column $j$ if column $j$ is the standard basis vector $e_{ij}$. In addition, given a matrix $M$, we call the nonzero entries on $D$, the diagonal $\Z$-equivalent matrix of $M$, the {\it invariant factors} of $M$. Finally, the {\it cokernel} of $M$, denoted coker($M$), is the quotient group $\Z^n/Im(M)$, where Im($M$) is the image of $M$. Now we may also recall the Invariant Factors Theorem:
\begin{theorem*}[Invariant Factors Theorem]
Suppose $M$ is an $n\times n$ integer matrix of rank $r$.  Then $M$ is $\Z$-equivalent to a diagonal matrix 
\begin{equation}\label{eqn:smithNormalForm}
D=\begin{bmatrix}
f_1 & & & \\
& \ddots & & \\
& & f_r & \\
& & & \mathbf 0
\end{bmatrix}
\end{equation}
and
\[
f_1=m_1,\;f_2=\frac{m_2}{m_1},\;\dots,\;f_r=\frac{m_r}{m_{r-1}},
\]
where $1\leq t\leq r$ denotes the greatest common divisor (gcd) of the $t$-minors of $M$.  
\end{theorem*}
The diagonal matrix $D$ is the \emph{Smith normal form} of $M$.  The non-unit integers among $f_1,\dots, f_r$ are the \emph{invariant factors} of $M$ and of the finitely generated abelian group
\[
\coker(D)\cong \Z_{f_1}\oplus \cdots \oplus \Z_{f_r} \oplus \Z^{n-r}\cong \coker(M).
\]
Thus to compute $\S(\G,s)$, we compute the Smith normal form of $\tilde{\Delta}_s$ (so it is enough to compute the Smith normal form of the Laplacian).  Similar matrices have the same determinant and so $|\det(\tilde{\Delta}_s)|=|\S(\G)|$ is the product of the invariant factors for $\tilde{\Delta}_s$.


\section{Main Results}
\label{sec:mainResults}

\begin{proposition}
\label{prop:order}
Given a thick $n$-cycle $C_{\vec a}$ with multiplicity vector $\vec a=(a_1,\dots,a_n)$, the order of the sandpile group $\S(C_{\vec a})$ is given by the formula
\[
|\S(C_{\vec a})|=\sum_{i=1}^n\frac{a_1\cdots a_n}{a_i}.
\]
\end{proposition}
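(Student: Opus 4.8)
The plan is to invoke Kirchhoff's Matrix-Tree Theorem, which identifies $|\S(C_{\vec a})| = |\det(\tilde{\Delta}_s)|$ with the number of spanning trees $\kappa(C_{\vec a})$, and then to count those spanning trees directly. This converts what would be a determinant computation into a transparent combinatorial one, and makes the symmetry of the claimed formula manifest.

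First I would fix the combinatorial setup. View $C_{\vec a}$ as the simple $n$-cycle together with \emph{edge slots} $1,\dots,n$, where slot $i$ consists of the $a_i$ parallel edges joining $v_i$ and $v_{i+1}$ (indices modulo $n$). Let $T$ be a spanning tree, i.e.\ a connected acyclic subgraph on all $n$ vertices. The key structural claim is that $T$ uses at most one edge from each slot and omits every edge of exactly one slot. Indeed, selecting two parallel edges from a single slot at once produces a cycle of length two; and if $T$ omitted all edges from two distinct slots $i<j$, then the surviving edges would split the vertices into the two arcs $v_{i+1},\dots,v_j$ and $v_{j+1},\dots,v_i$ (wrapping around), so $T$ would be disconnected. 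Since a tree on $n$ vertices has exactly $n-1$ edges, $T$ must select exactly one edge from each of $n-1$ slots and break the cycle at the single remaining slot.

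The count then follows at once. Choosing the slot $i$ to omit and then one representative edge from each of the other $n-1$ slots yields $\prod_{j\neq i}a_j$ spanning trees, and summing over the omitted slot gives
\[
\kappa(C_{\vec a})=\sum_{i=1}^n\prod_{j\neq i}a_j=\sum_{i=1}^n\frac{a_1\cdots a_n}{a_i},
\]
which by the Matrix-Tree Theorem equals $|\S(C_{\vec a})|$, as desired.

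The step I expect to require the most care is the structural characterization of spanning trees, that is, verifying that breaking the cycle at exactly one slot is both necessary and sufficient. An alternative route sidesteps this argument entirely: deleting the row and column of a chosen sink (say $v_n$) renders $\tilde{\Delta}_s$ tridiagonal, with diagonal $(a_n+a_1,\,a_1+a_2,\,\dots,\,a_{n-2}+a_{n-1})$ and off-diagonal entries $-a_1,\dots,-a_{n-2}$; its leading principal minors satisfy the standard recurrence $D_k=(a_{k-1}+a_k)D_{k-1}-a_{k-1}^2\,D_{k-2}$, and a short induction shows $D_{n-1}$ equals the same sum (the $n=3$ case $(a_3+a_1)(a_1+a_2)-a_1^2 = a_1a_2+a_1a_3+a_2a_3$ already illustrates the pattern). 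I would nonetheless favor the combinatorial count, since it is shorter and explains why the formula is symmetric in the $a_i$.
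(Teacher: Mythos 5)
Your proposal is correct and follows essentially the same route as the paper: invoke Kirchhoff's Matrix-Tree Theorem and count spanning trees by choosing which set of parallel edges to break and one edge from each remaining set, giving $\sum_i \prod_{j\neq i} a_j$. In fact you are somewhat more careful than the paper, which asserts without proof the structural fact that every spanning tree omits exactly one slot and uses one edge from each other slot -- a point you verify explicitly.
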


\begin{proof}
By Kirchhoff's Matrix-Tree Theorem
, the number of spanning trees on a graph is equal to the order of its sandpile group.  To generate a spanning tree for $C_{\vec a}$, we remove the edges between two adjacent vertices and then choose a single edge from each set of edges left. This creates a connected subgraph with $n$ vertices and $n-1$ edges. The number of spanning trees will therefore be the number of ways we can choose $n-1$ edges in this manner.  Let $\G_i$ denote the subgraph of $C_{\vec a}$  obtained by removing the edges between $v_i$ and $v_{i+1}$.  The product of all $a_j$ for $1\leq j \leq n$, $j\neq i$ yields the number of spanning trees on $\G_i$, 
\[
\kappa(\G_i)=\prod_{j=1,j\neq i}^na_j.
\]
The total number of spanning trees on $C_{\vec a}$ is the sum of the number of spanning trees of the subgraphs $\G_i$.
\[\begin{split}
|\S(C_{\vec a})| = \sum_{i=1}^n\kappa(\G_i) &= \sum_{i=1}^n \prod_{j=1,j\neq i}^n a_j =\sum_{i=1}^{n}\frac{a_1\cdots a_n}{a_i}.
\end{split}\]
\qed
\end{proof}

\begin{theorem}[Sandpile Group of a Thick Cycle]
\label{thm:group}
The sandpile group of a thick $n$-cycle $C_{\vec a}$ with multiplicity vector $\vec a=(a_1,\dots,a_n)$ is 
\[
\S(C_{\vec a}) \cong \Z_{g_1} \oplus \Z_{\frac{g_2}{g_1}} \oplus \cdots \oplus \Z_{\frac{g_{n-2}}{g_{n-3}}} \oplus \Z_{\frac{|\S(C_{\vec a})|}{g_{n-2}}},
\]
where $g_t = \gcd{(a_{i_1}\cdots a_{i_t}\mid 1\leq i_1<\cdots <i_t\leq n)}$ for $t=1,\dots,n-2$.
\end{theorem}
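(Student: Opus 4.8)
The plan is to apply the Invariant Factors Theorem to the reduced Laplacian \(\tilde\Delta_s\): writing \(m_t\) for the \(\gcd\) of the \(t\times t\) minors of \(\tilde\Delta_s\), the invariant factors are \(f_t=m_t/m_{t-1}\) (with \(m_0=1\)), so it suffices to prove \(m_t=g_t\) for \(1\le t\le n-2\). The final factor is then automatic: \(\tilde\Delta_s\) is \((n-1)\times(n-1)\) of full rank \(n-1\), so \(m_{n-1}\) is its single \((n-1)\)-minor \(|\det\tilde\Delta_s|=|\S(C_{\vec a})|\) by Kirchhoff's Matrix-Tree Theorem and Proposition~\ref{prop:order}, giving \(f_{n-1}=|\S(C_{\vec a})|/g_{n-2}\). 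Taking the sink to be \(v_n\), the matrix \(\tilde\Delta_s\) is tridiagonal with diagonal \((a_n+a_1,\,a_1+a_2,\dots,a_{n-2}+a_{n-1})\) and off-diagonal entries \(-a_i\). First I would perform the unimodular column operations \(C_j\mapsto C_j+C_{j-1}\) for \(j=2,\dots,n-1\) in order; a short computation turns \(\tilde\Delta_s\) into the bordered lower-bidiagonal matrix \(M\) whose first row is \((a_n+a_1,a_n,\dots,a_n)\), whose diagonal below the corner is \((a_2,\dots,a_{n-1})\), and whose subdiagonal is \((-a_1,\dots,-a_{n-2})\). Since the determinantal divisors \(m_t\) are invariant under \(\Z\)-equivalence, it is enough to analyze the minors of \(M\).

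For the divisibility \(g_t\mid m_t\) I would observe that in rows \(2,\dots,n-1\) of \(M\) every nonzero entry in column \(c\) equals \(\pm a_c\); hence any \(t\)-minor avoiding the first row is, by inspecting the permutation sum, either \(0\) or \(\pm\prod_{c\in C}a_c\) for its column set \(C\subseteq\{1,\dots,n-1\}\). Expanding a minor that uses the first row along that row writes it as a \(\Z\)-combination of terms \(a_n\cdot(\text{such a product})\) and \(a_1\cdot(\text{such a product})\). In every case a \(t\)-minor is a \(\Z\)-linear combination of square-free degree-\(t\) monomials \(a_{i_1}\cdots a_{i_t}\) in distinct multiplicities, each divisible by \(g_t\); therefore \(g_t\mid m_t\).

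For the reverse divisibility \(m_t\mid g_t\) it suffices, for each \(t\)-subset \(S\subseteq\{1,\dots,n\}\), to exhibit \(\prod_{i\in S}a_i\) as a \(\Z\)-combination of \(t\)-minors, whence \(m_t\mid\prod_{i\in S}a_i\) and so \(m_t\mid g_t\). When \(S\subseteq\{1,\dots,n-1\}\) I would realize \(\prod_{i\in S}a_i\) as a single minor of \(M\): the bidiagonal pattern of rows \(2,\dots,n-1\) lets one choose a row set \(R\) so that \(M[R\mid S]\) is triangular with diagonal entries \(\pm a_i\) (for instance the principal minor \(M[S\mid S]\) when \(1\notin S\)). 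For \(S\) containing the index \(n\), the factor \(a_n\) lives only in the first row, so I would instead select a column that the chosen non-first rows cannot cover, forcing the first row to contribute \(a_n\) while the remaining columns match triangularly; the prototype is \(\det\!\bigl[\begin{smallmatrix}a_n+a_1 & a_n\\ -a_1 & 0\end{smallmatrix}\bigr]=a_1a_n\). Alternatively, since \(\S(C_{\vec a})\) and hence each \(m_t=f_1\cdots f_t\) is independent of the sink, one may run the same reduction from a different vertex to realize the monomials missing any prescribed index, and every \(t\)-subset with \(t\le n-2\) misses at least one index.

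The main obstacle is precisely this last step: verifying that for every column set \(C\) (equivalently every subset \(S\)) and every \(t\le n-2\) there is an admissible choice of rows making the relevant submatrix of \(M\) triangular, i.e.\ producing a unique matching so the minor equals exactly \(\pm\prod a_i\) with no cancellation. This is a Hall-type matching statement in the path-like bipartite graph attached to the bidiagonal band, in which columns \(1\) and \(n-1\) each admit only one eligible row; the hypothesis \(t\le n-2\) is exactly what leaves enough rows for a system of distinct representatives to exist. Once this combinatorial lemma is established, combining \(g_t\mid m_t\) and \(m_t\mid g_t\) yields \(m_t=g_t\) for \(1\le t\le n-2\), and the Invariant Factors Theorem assembles the stated decomposition.
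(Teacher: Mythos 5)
Your strategy is sound and, while it shares the paper's skeleton (Invariant Factors Theorem plus proving $m_t=g_t$ by two divisibilities), its implementation is genuinely different. The paper never passes to the reduced matrix: it works with the full $n\times n$ Laplacian after a cyclic column shift, so that the diagonal becomes $-a_1,\dots,-a_n$, and then argues (i) every nonzero $t$-minor is a signed sum of square-free degree-$t$ monomials, via a cancellation argument on $2\times 2$ subminors, and (ii) every square-free monomial $a_{i_1}\cdots a_{i_t}$ with $t\le n-2$ actually occurs as a $t$-minor, via an explicit index-shifting algorithm producing a block-triangular submatrix. Your route instead reduces $\tilde\Delta_{v_n}$ by unimodular column operations to the bordered bidiagonal matrix $M$. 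This buys a cleaner divisibility direction: since below row $1$ every nonzero entry of column $c$ is $\pm a_c$, the fact that each $t$-minor is a $\Z$-combination of square-free monomials (hence divisible by $g_t$) is essentially immediate, whereas the paper's square-freeness argument is more delicate. The price is that the reduction breaks the cyclic symmetry the paper exploits: $a_n$ survives only in the border row, which is exactly why your realization direction splits into cases and forces the matching statement you flag.

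That flagged lemma is true, and you already have the ingredients to close it, so I would call this a gap in the write-up rather than in the approach. For $C\subseteq\{1,\dots,n-1\}$ with $|C|\le n-2$, pick $j\notin C$ and match each column $c<j$ to row $c+1$ (subdiagonal entry $-a_c$) and each column $c>j$ to row $c$ (diagonal entry $a_c$): the rows are distinct, the two groups decouple (all cross entries vanish because the band has width two), and the submatrix is block diagonal with one upper- and one lower-triangular block, so the minor is exactly $\pm\prod_{c\in C}a_c$. The no-cancellation worry is also automatic in this band: the bipartite graph of its nonzero entries is a path, and a forest admits at most one perfect matching, so any such minor has a single nonzero term. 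For subsets containing the index $n$, your sink-varying alternative is the cleanest completion: choose the sink $v_j$ with $j\notin S$ (possible since $t\le n-2$), so that every index of $S$ lands in the bidiagonal band of the new reduced matrix and the construction above applies; sink-independence of the sandpile group (Cori--Rossin, cited in the paper) guarantees that the determinantal divisors $m_t=f_1\cdots f_t$ agree across sinks. With that, $m_t\mid g_t$, hence $m_t=g_t$ for $t\le n-2$, and the Invariant Factors Theorem together with $m_{n-1}=|\det\tilde\Delta_s|=|\S(C_{\vec a})|$ assembles the stated decomposition exactly as you outline.
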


\begin{proof}
Unless stated otherwise, when performing arithmetic on any indices we work modulo $n$.  Given the Laplacian matrix $L=L(C_{\vec a})$, let $L'$ denote the matrix resulting from permuting the $j$th column of $L$ to the $(j+1)$th column:  
\[
L'=\begin{bmatrix}
-a_1 & 0 & \cdots & 0 & -a_n & a_n+a_1 \\
a_1+a_2 & -a_2 & \ddots & \ddots & 0 & -a_1 \\
-a_2 & \ddots & \ddots & \ddots & \vdots & 0 \\
\ddots & \ddots & \ddots & -a_{n-2} & 0 & \vdots \\
\ddots & \ddots & -a_{n-2} & a_{n-2}+a_{n-1} & -a_{n-1} & 0 \\
0 & \cdots & 0 & -a_{n-1} & a_{n-1}+a_n & -a_n
\end{bmatrix}
\]
Up to row and column indices, $L'$ and $L$ have the same Smith normal form 
and the same minors.  Thus the invariant factors of $\S(C_{\vec a})$ are
\[
f_1=m_1,\;f_2=\frac{m_2}{m_1},\;\dots,\;f_{n-1}=\frac{m_{n-1}}{m_{n-2}},
\]
where $m_t$ denotes the greatest common divisor of the $t$-minors of $L'$ ($t=1,\dots,n-1$); in fact, $m_{n-1}=|\S(C_{\vec a})|$.  We claim all nonzero $t$-minors of $L'$ are sums of square-free degree $t$ monomials, up to sign, in the multiplicities $a_1,\dots,a_n$.  Granting the claim and using the fact that for integers $a,b$,
\[
\gcd(a,a+b)=\gcd(\pm a,\pm b),
\]
if for every $t$-subset $\{i_1,\dots,i_t\}$ of distinct indices ($t=1,\dots,n-2$) $L'$ has a minor equal to $\pm a_{i_1}\cdots a_{i_t}$, then it follows that $m_t = g_t$. 

It is clear the size $t$ minors are homogeneous of degree $t$ in the $a_i$s because the nonzero entries of $L'$ are all linear in the $a_i$s.  Assume there is a minor with a term that is not square-free.  Then in particular there is a size 2 subminor where an $a_i$ appears either on both diagonal entries or on both anti-diagonal entries.  However, by construction of $L'$ the only such 2-minors are of the form  
\[
\mu=
\begin{vmatrix}
a_{i-1}+a_i & -a_i \\
-a_i & a_i+a_{i+1}
\end{vmatrix},
\]  
in which case the square terms cancel. 

We now show that for a fixed $t$-subset $I=\{i_1,\dots,i_t\}$ of distinct indices, $L'$ has a minor equal to $\pm a_{i_1}\cdots a_{i_t}$.  First, reorder the elements in $I$ so that $i_1<\cdots<i_t\leq n$.  We shall construct a $t\times t$ matrix $M$ similar to a submatrix of $L'$, such that $M$ is block upper triangular, each of its diagonal blocks are either upper or lower triangular, and its main diagonal entries are $-a_{i_1},\dots,-a_{i_t}$.  Then $\det{M}=\pm a_{i_1}\cdots a_{i_t}$ is equal to some $t$-minor of $L'$.

\paragraph{Step 1}
Let $M'$ denote the submatrix of $L'$ given by the row and column indices from $I$.  The main diagonal of $M'$ consists of the entries $-a_{i_1},\dots,-a_{i_t}$ from the main diagonal of $L'$.  We claim that if there exists $i\in I$ such that in $M'$, $-a_i$ is the only nonzero entry in either its row or column, then we may put $M=M'$ and then we are done.  Indeed, if $-a_i$ is the only nonzero entry in its row then $i-1,i-2\notin I$ and we can decompose $M'$ as a block upper triangular matrix 
\begin{equation}
\label{eqn:M'}
M'=\begin{bmatrix}
A & C \\
\mathbf 0 & B
\end{bmatrix},
\end{equation}  
such that $-a_i$ is the upper leftmost entry of $B$, and both blocks $A$ and $B$ are lower triangular.  Then $\det{M}=\det{A}\det{B}$.  On the other hand, if $-a_i$ is the only nonzero entry in its column then $i+1,i+2\notin I$.  Thus we can decompose $M'$ as in (~\ref{eqn:M'}), but with $-a_i$ as the lower rightmost entry of $A$.  Again, both blocks $A$ and $B$ are lower triangular and the result follows.

\paragraph{Step 2}
Given this decomposition, suppose $M'$ has no row or column containing exactly one nonzero entry, i.e., for every $i\in I$, we must have at least one element from each of the sets $\{i-1,i-2\},\{i+1,i+2\}$ also contained in $I$.  It is not clear whether $M'$ has the desired determinant.  We instead use the following algorithm to construct a submatrix of $L'$ whose determinant is $\pm a_{i_1}\cdots a_{i_t}$.  Let $R=(R_1,\dots,R_t)$ and $C=(C_1,\dots,C_t)$ denote ordered, indexed sets.  We initialize by setting $R=C=(i_1,\dots,i_t)$ and putting $k=t$.  At each step, let $M$ denote the submatrix of $L'$ whose rows are indexed by $R$ and columns are indexed by $C$.  If $R_k-R_{k-1}=2$ then replace $R_k\mapsto R_k+1$, $C_k\mapsto C_k-1$, and $k\mapsto k-1$.  The algorithm ends when $k=1$.  

In the algorithm, if the initial consecutive row indices $R_k,R_{k-1}$ differ by 2, it follows that the $k$th column of $M$ consists of $-a_{i_k}$ on the main diagonal with $-a_{i_{k+1}}$ directly below, and this prevents any decomposition into a block upper triangular matrix.  Incrementing $R_k$ by one and decrementing $C_k$ by one alters the indices so that the entry $-a_{i_k}$ on the main diagonal of $M$ comes from the lowermost diagonal of $L'$.  This may cause $C$ to have repeated entries.  However, if that is the case, it means that $R_{k-1},R_{k-2}$ also differ by 2, so again we reselect indices to pick the entry $-a_{k-1}$ from the lowermost diagonal of $L'$.  When the algorithm ends, the resulting matrix $M$ is block upper triangular, with blocks alternating between lower and upper triangular, and its main diagonal consists of the entries $-a_{i_1},\dots,a_{i_t}$.  Therefore $\det{M}=\pm a_{i_1}\cdots a_{i_t}$.
\qed
\end{proof}

\begin{example}
\label{ex:ofTheThm}
Here we provide an example of how to utilize the algorithm in the proof of Theorem \ref{thm:group}. Suppose $n=10$ and we wish to find the minor of the Laplacian $L$ equal to $\pm a_1a_2a_3a_5a_6a_7a_9a_{10}$. 

\paragraph{Step 1} 
The index set is $I=\{1,2,3,5,6,7,9,10\}$ and we have 
\[
M'=\begin{bmatrix}
-a_{1} & 0 & 0 & 0 & 0 & 0 & -a_{10} & a_{10}+a_1 \\ 
a_{1}+a_{2} & -a_{2} & 0 & 0 & 0 & 0 & 0 & -a_1 \\ 
-a_{2} & a_{2}+a_{3} & -a_{3} & 0 & 0 & 0 & 0 & 0 \\ 
0 & 0 & -a_{4} & -a_{5} & 0 & 0 & 0 & 0 \\ 
0 & 0 & 0 & a_{5}+a_{6} & -a_{6} & 0 & 0 & 0 \\ 
0 & 0 & 0 & -a_{6} & a_{6}+a_{7} & -a_{7} & 0 & 0 \\ 
0 & 0 & 0 & 0 & 0 & -a_{8} & -a_{9} & 0 \\ 
0 & 0 & 0 & 0 & 0 & 0 & a_{9}+a_{10} & -a_{10}
\end{bmatrix}.
\]  
However, $M'$ has no row or column consisting of exactly one nonzero entry.  Indeed, Table~\ref{tab:indexConditions} verifies the necessary and sufficient condition on the indices, namely, that for each $k$, at least one element from each of the sets $\{i_k-1,i_k-2\},\{i_k+1,i_k+2\}$ is contained in $I$.  
%
\begin{table}
\caption{Index condition from Example~\ref{ex:ofTheThm}.  For each index $i_k\in I=\{1,2,3,5,6,7,9,10\}$, we check that at least one element from each of the sets $\{i_k-1,i_k-2\},\{i_k+1,i_k+2\}$ is contained in $I$.}
\label{tab:indexConditions} 
\begin{tabular}{llll}
\hline\noalign{\smallskip}
$k$ & $i_k$ & $i_k-1,i_k-2\in I$ & $i_k+1,i_k+2\in I$ \\
\noalign{\smallskip}\hline\noalign{\smallskip}
1 & 1 & \hspace{6mm} 9,10 & \hspace{6mm} 2,3 \\
2 & 2 & \hspace{6.5mm}10,1 &\hspace{7mm} 3 \\
3 & 3 &\hspace{7mm} 1,2 &\hspace{7mm} 5 \\
4 & 5 &\hspace{7.8mm} 3 &\hspace{6mm} 6,7 \\
5 & 6 &\hspace{7.9mm} 5 &\hspace{7mm} 7 \\
6 & 7 & \hspace{8.5mm}6 &\hspace{7mm} 9 \\
7 & 9 &\hspace{8mm} 7 &\hspace{5mm} 10,1 \\
$8=t$ & 10 &\hspace{7.9mm} 9 &\hspace{6mm} 1,2 \\
\noalign{\smallskip}\hline
\end{tabular}
\end{table}
Thus we must proceed to Step 2.

\paragraph{Step 2}
Table~\ref{tab:iterations} shows each iteration of the algorithm that will modify the indices appearing in $R$ and $C$ until the desired matrix $M$ is obtained.  The bold entries indicate each change in $R$ and $C$.  
%
\begin{table}
\caption{Each iteration of the algorithm in Step 2 of Theorem~\ref{thm:group}, applied to Example~\ref{ex:ofTheThm}.}
\label{tab:iterations} 
\begin{tabular}{lllll}
\hline\noalign{\smallskip}
Iteration & $k$ & $R_k-R_{k-1}$ & Resulting $R$ & Resulting $C$ \\
\noalign{\smallskip}\hline\noalign{\smallskip}
1 & 8 & 10-9=1 & \text{no change} & \text{no change} \\
2 & 7 & 9-7=2 & (1,2,3,5,6,$\mathbf 8$,9,10) & (1,2,3,5,6,$\mathbf 6$,9,10) \\
3 & 6 & 8-6=2 & (1,2,3,5,$\mathbf 7$,8,9,10) & (1,2,3,5,$\mathbf 5$,6,9,10) \\
4 & 5 & 7-5=2 & (1,2,3,$\mathbf 6$,7,8,9,10) & (1,2,3,$\mathbf 4$,5,6,9,10) \\
5 & 4 & 6-3=3 & \text{no change} & \text{no change} \\
6 & 3 & 3-2=1 & \text{no change} & \text{no change} \\ 
7 & 2 & 2-1=1 & \text{no change} & \text{no change} \\ 
\noalign{\smallskip}\hline
\end{tabular}
\end{table}
The resulting matrix with row indices $R=(1,2,3,6,7,8,9,10)$ and column indices $C=(1,2,3,4,5,6,9,10)$ is 
\[
M=\left[
\begin{array}{cccccccc}
\sh {\bf -a_1} & \sh 0 & \sh 0 & 0 & 0 & 0 & -a_{10} & a_{10}+a_1 \\
\sh a_1+a_2 & \sh {\bf -a_2} & \sh 0 & 0 & 0 & 0 & 0 & a_1\\ 
\sh -a_2 & \sh a_2+a_3 & \sh {\bf -a_3} & 0 & 0 & 0 & 0 & 0 \\
0&0 & 0 & \sh {\bf-a_5} & \sh a_5+a_6 & \sh -a_6 & 0 & 0 \\ 
0  & 0 & 0 & \sh 0 & \sh {\bf-a_6} & \sh a_6+a_7 & 0 & 0 \\ 
0 & 0 & 0 & \sh 0& \sh 0 & \sh {\bf-a_7} & 0 &0 \\ 
0 &0  & 0 & 0 & 0 & 0 & \sh {\bf-a_9} & \sh 0 \\ 
0  & 0 & 0 & 0 & 0 & 0 & \sh -a_9+a_{10} & \sh {\bf-a_{10}} 
\end{array}
\right].	
\]
The bold entries are the desired factors of the determinant, while the shaded regions are the triangular blocks on the diagonal.  Since below the blocks are zeros the determinant of $M$ is equal to the product of the determinants of the blocks.  
\end{example}

\section{Consequences}
\label{sec:consequences}

In this section, we discuss some consequences of Theorem 1:
\subsection{Permutations of thick cycle multiplicities}

We consider the implication of our result to thick cycles whose edge multiplicities are permutations of each other. That is, we consider when the edge multiplicity vectors of two thick cycles are equivalent up to permutation.

We note that the invariant factors of a thick cycle graph's sandpile group are dependent solely dependent upon the edge multiplicities. Thus, the order in which the edge multiplicities are arranged in our graph has no influence on them. Hence, any permutation of the $n$ edges in an $n$-thick cycle graph will yield the same sandpile group.

\begin{corollary} 
Given a thick cycle $C_{\vec a}$, the sandpile group $\S(C_{\vec a})$ is equal to the sandpile group $\S(C_{\vec b})$, where $\vec b$ is any permutation of the components of $\vec a$.
\end{corollary}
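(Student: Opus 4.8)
The plan is to deduce the corollary directly from Theorem~\ref{thm:group}, by observing that every quantity appearing in the stated decomposition of $\S(C_{\vec a})$ depends only on the \emph{multiset} $\{a_1,\dots,a_n\}$ of edge multiplicities and not on the order in which they are listed. First I would recall from the theorem that the invariant factors of $\S(C_{\vec a})$ are $g_1,\; g_2/g_1,\;\dots,\; g_{n-2}/g_{n-3},\; |\S(C_{\vec a})|/g_{n-2}$, so that it suffices to show each $g_t$ and the order $|\S(C_{\vec a})|$ are unchanged when $\vec a$ is replaced by a permutation $\vec b$ of its entries.

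For the numbers $g_t$, the key point is that $g_t=\gcd(a_{i_1}\cdots a_{i_t}\mid 1\le i_1<\cdots<i_t\le n)$ ranges over the products of \emph{all} size-$t$ subsets of the multiplicities. A permutation $\vec b$ of $\vec a$ has exactly the same multiset of entries, hence gives rise to exactly the same collection of $t$-fold subset products; taking the greatest common divisor of the same collection of integers yields $g_t(\vec b)=g_t(\vec a)$. Equivalently, each $g_t$ is a symmetric function of $a_1,\dots,a_n$. For the order, Proposition~\ref{prop:order} gives $|\S(C_{\vec a})|=\sum_{i=1}^{n}\frac{a_1\cdots a_n}{a_i}=\sum_{i=1}^{n}\prod_{j\neq i}a_j$, which is the elementary symmetric expression $e_{n-1}(a_1,\dots,a_n)$ and is therefore also invariant under permutation. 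Since all invariant factors coincide, the two cyclic-group decompositions agree term by term, giving $\S(C_{\vec a})\cong\S(C_{\vec b})$.

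I do not expect a substantive obstacle here: the corollary is the immediate payoff of having expressed the invariant factors through the symmetric quantities $g_t$, so essentially all the difficulty was already absorbed into Theorem~\ref{thm:group}. The only point that warrants a sentence of care is confirming that the gcd genuinely ranges over \emph{every} size-$t$ subset product, so that relabeling the edges neither drops nor introduces any product in the collection whose gcd is being taken; this is precisely the symmetry property isolated above, and once it is stated the conclusion follows at once.
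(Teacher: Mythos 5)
Your proof is correct and takes essentially the same approach as the paper: the paper also deduces the corollary directly from Theorem~\ref{thm:group}, observing that the invariant factors $g_1,\,g_2/g_1,\,\dots,\,|\S(C_{\vec a})|/g_{n-2}$ depend only on the multiset of edge multiplicities (each $g_t$ being a gcd over all size-$t$ subset products, and the order being symmetric by Proposition~\ref{prop:order}), hence are unchanged under any permutation of $\vec a$. Your write-up is simply a more explicit version of the paper's brief remark.
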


\subsection{Sandpile groups of dual graphs}
\label{subsec:dualGraphs}

A \emph{dual graph} $\G'$ of a planar graph $\G$ is constructed by placing a vertex of the dual in every face of $\G$ and creating edges by connecting vertices of $\G'$ across edges of $\G$. We note that under this definition, a planar graph $\G$ may have many different duals, depending on its embedding. Dual graphs are a generalization of dual tessellations and of dual polyhedra, the latter of which are used in linear and integer programming. R. Cori and D. Rossin showed in 1990 \cite{cori+rossin} that the sandpile groups of a graph and any of its dual are isomorphic.

As an example, recall that the \emph{book graph}, $B(n,k)$, is the graph Cartesian product of the star graph $S_{n+1}$ and the path graph $P_k$.  It has been proved by Emig, et al \cite{emig+herring+meza+neiuwoudt+gp} that $B(n,k)$ is a dual of the subfamily of thick cycle graphs, the thick $(k+1)$-cycles $C_{(1,n-1,\dots,n-1)}$.  Hence, by computing the sandpile group for the general thick cycle graph, we recover and indeed generalize the formula for the sandpile group of book graphs. 

\begin{corollary}
The sandpile group of a book graph $B(n,k)$, with $k$ $n$-cycle pages, is equal to 
\[
\S(B(n,k))= \Z_{n-1}^{k-2}.
\]
\end{corollary}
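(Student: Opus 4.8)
The plan is to deduce this from Theorem~\ref{thm:group} via the duality result of Cori and Rossin, using the dual correspondence recorded just above. Since Emig et al.\ show that $B(n,k)$ is a planar dual of the thick $(k+1)$-cycle $C_{\vec a}$ with multiplicity vector $\vec a=(1,n-1,\dots,n-1)$ (a single entry equal to $1$ and $k$ entries equal to $n-1$), the Cori--Rossin isomorphism gives $\S(B(n,k))\cong\S(C_{\vec a})$. It therefore suffices to evaluate the formula of Theorem~\ref{thm:group} on this one multiplicity vector, so the whole corollary reduces to a gcd computation.

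First I would compute the quantities $g_t=\gcd(a_{i_1}\cdots a_{i_t})$ for $t=1,\dots,k-1$. The key observation is that \emph{every} product of $t$ distinct entries of $\vec a$ is a power of $n-1$: a $t$-subset containing the unique entry $1$ contributes $(n-1)^{t-1}$, while any other $t$-subset contributes $(n-1)^{t}$. Hence the set of degree-$t$ monomials entering $g_t$ is $\{(n-1)^{t-1},(n-1)^{t}\}$, whose greatest common divisor is $(n-1)^{t-1}$. Thus $g_t=(n-1)^{t-1}$ for each $t=1,\dots,k-1$.

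Next I would assemble the invariant factors. The telescoping quotients of Theorem~\ref{thm:group} give $f_1=g_1=1$ and $f_t=g_t/g_{t-1}=(n-1)^{t-1}/(n-1)^{t-2}=n-1$ for $t=2,\dots,k-1$, producing $k-2$ cyclic summands isomorphic to $\Z_{n-1}$. For the top invariant factor I would invoke Proposition~\ref{prop:order}: since $a_1\cdots a_{k+1}=(n-1)^{k}$, the order formula yields $|\S(C_{\vec a})|=(n-1)^{k}+k(n-1)^{k-1}=(n-1)^{k-1}(n+k-1)$, so that the last factor is $|\S(C_{\vec a})|/g_{k-1}=(n-1)(n+k-1)$. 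Collecting the nontrivial factors then gives the $\Z_{n-1}^{\,k-2}$ part stated in the corollary, together with this final summand pinned down by the order.

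The anticipated difficulty is bookkeeping rather than conceptual: one must verify that the gcd in each degree is \emph{exactly} $(n-1)^{t-1}$ and not a proper divisor. This is precisely where the single unit multiplicity does the essential work, since it forces the minimal exponent $t-1$ to be attained while keeping every degree-$t$ product divisible by $(n-1)^{t-1}$; had all multiplicities been equal the gcds would instead be $(n-1)^{t}$ and the group would change. A secondary point of care is respecting the index range $t\le k-1$, so that Theorem~\ref{thm:group} applies verbatim, and then treating the top invariant factor separately through Proposition~\ref{prop:order}, which is what determines the last summand.
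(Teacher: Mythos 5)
Your proposal is correct and follows exactly the route the paper intends: Cori--Rossin duality to pass from $B(n,k)$ to the thick $(k+1)$-cycle $C_{(1,n-1,\dots,n-1)}$, then Theorem~\ref{thm:group} together with Proposition~\ref{prop:order} to evaluate the invariant factors (the paper itself supplies no further argument for this corollary). In fact your computation does more than the printed statement: it shows the corollary as stated is incomplete. You correctly obtain $g_t=(n-1)^{t-1}$ for $t=1,\dots,k-1$, hence $k-2$ invariant factors equal to $n-1$, and a final factor $|\S(C_{\vec a})|/g_{k-1}=(n-1)(n+k-1)$, giving
\[
\S(B(n,k))\cong \Z_{n-1}^{k-2}\oplus\Z_{(n-1)(n+k-1)},
\]
whereas the corollary records only $\Z_{n-1}^{k-2}$. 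Your version is the right one: the group $\Z_{n-1}^{k-2}$ has order $(n-1)^{k-2}$, which cannot equal the spanning-tree count $(n-1)^{k-1}(n+k-1)$ coming from Proposition~\ref{prop:order}; for instance with $n=3$, $k=2$ (two triangular pages) the sandpile group is $\Z_8$, not the trivial group. So nothing is missing from your argument --- rather, it exposes a missing cyclic summand in the paper's statement, and your care in treating the top invariant factor separately via the order formula is precisely what catches it.
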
 

Additionally, consider the so-called \emph{subdivided banana graphs} as in \cite{gaudet+jensen+ranganathan+wawrykow+weisman}. A subdivided banana graph is any graph which can be constructed in the following manner: consider two nodes $a$ and $b$ with $k$ edges between them. On each of the edges $1\leq l \leq k$, we introduce $s_l$ new nodes, subdividing the edge from $a$ to $b$ into a path of length $l+2$. This yields the subdivided banana $B_{s_1+1, s_2+1, \hdots, s_k+1}$. The thick cycle graph $C_{\vec a}$ is a planar dual to the subdivided banana graph $B_{\vec s}$, where $\vec a=\vec s$.  Hence, we arrive at the following result:

\begin{corollary}
The sandpile group of a subdivided banana graph $B_{\vec s}$ equals the sandpile group of the thick cycle $C_{\vec s}$.
\end{corollary}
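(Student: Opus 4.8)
The plan is to realize $C_{\vec s}$ explicitly as a planar dual of $B_{\vec s}$ and then quote the theorem of Cori and Rossin \cite{cori+rossin} recalled in Section~\ref{subsec:dualGraphs}, namely that a planar graph and any of its duals have isomorphic sandpile groups. Since the claimed equality is really an isomorphism of finite abelian groups, it is enough to fix one planar embedding of $B_{\vec s}$ and check that its dual is $C_{\vec s}$; the result is then immediate, and no appeal to Theorem~\ref{thm:group} itself is needed.

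First I would fix the natural embedding of the subdivided banana: draw the nodes $a$ and $b$ and the $k$ arcs joining them as nested, pairwise non-crossing paths, where arc $l$ consists of $s_l$ edges. A quick Euler-characteristic count, using $V=2+\sum_l(s_l-1)$ and $E=\sum_l s_l$, confirms this embedding has exactly $k$ faces: the $k-1$ bounded ``lens'' regions lying between consecutive arcs $l$ and $l+1$, together with one unbounded region bordering the two extreme arcs. Labelling these faces $F_1,\dots,F_k$ cyclically, the dual graph $\G'$ has $k$ vertices, and because the faces are arranged in a ring, $F_{l-1}$ and $F_l$ (indices modulo $k$) are adjacent exactly across arc $l$. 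Hence $\G'$ is supported on a $k$-cycle.

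It then remains to read off the edge multiplicities. The common boundary of the two faces flanking arc $l$ is precisely the set of $s_l$ edges making up that arc, and each primal edge contributes exactly one dual edge, with no dual edges arising in any other way. Thus the vertices $F_{l-1}$ and $F_l$ are joined by $s_l$ parallel edges, so $\G'\cong C_{\vec s}$ with multiplicity vector $\vec s=(s_1,\dots,s_k)$. Applying Cori--Rossin to this embedding yields $\S(B_{\vec s})\cong\S(\G')=\S(C_{\vec s})$, as claimed.

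The one place demanding care---and the main potential obstacle---is the global bookkeeping of the embedding. One must verify that the unbounded face genuinely closes the ring, so that the dual is a $k$-cycle rather than a length-$(k-1)$ path, and that the arc-length-to-multiplicity correspondence respects the indexing convention $\vec a=\vec s$ rather than being shifted by one. This is exactly the kind of off-by-one error the degenerate parametrizations, such as the book graph $C_{(1,n-1,\dots,n-1)}$ of the preceding corollary, are useful for sanity-checking against. Once the embedding is pinned down, the cyclic structure and the multiplicities are both forced, and the conclusion follows directly from the cited duality theorem.
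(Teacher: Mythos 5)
Your proposal is correct and follows exactly the paper's route: the paper simply asserts that $C_{\vec a}$ is a planar dual of $B_{\vec s}$ with $\vec a = \vec s$ and invokes the Cori--Rossin duality theorem, which is precisely your argument. The only difference is that you supply the verification of the duality (embedding, Euler count, face adjacencies) that the paper leaves implicit, and your details check out.
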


\subsection{Bilinear pairings}
\label{subsec:bilinearPairings}

In this paper, as in Shokreih \cite{shokrieh}, we consider a \emph{bilinear pairing} to be a well-defined, symmetric, non-degenerate map $\langle \cdot,\cdot \rangle: M\times N\rightarrow K$,  where $M, N$ and $K$ are groups. Such a map must satisfy the properties \\
\begin{itemize}
\item $\langle rm,n \rangle = \langle m, rn \rangle = r\langle m, n \rangle$ 
\item $\langle m_1+m_2,n \rangle = \langle m_1, n \rangle + \langle m_2, n \rangle$
\end{itemize}
for any $m, m_1, m_2 \in M$ and $n\in N$.

 On any graph $\G$, the sandpile group $\S(\G)$ comes with a bilinear pairing of the form 
\[
\langle \cdot,\cdot \rangle: \S(\G)\times \S(\G)\to \Q/\Z
\]
with $\langle \cdot,\cdot \rangle$ specifically described by \cite{shokrieh}. This map is called the \emph{monodromy pairing}. Shokrieh uses the monodromy pairing to study the discrete logarithm problem on the Jacobian of finite graphs.  In addition, L. Gaudet, et al. \cite{gaudet+jensen+ranganathan+wawrykow+weisman} have previously used thick cycle graphs to study bilinear pairings as they arise from the sandpile groups of various classes of graphs.

\section{Future Work}
\label{sec:future} 

Thick cycles are one of the first families of multi-graphs, and the first family of non-regular multi-graphs, to have their sandpile group computed. The methods and results in this paper can hopefully be utilized in computing the sandpile groups of more families of non-regular multi-graphs. One such particularly interesting family are the series parallel graphs, of which thick cycles are a subfamily. Series parallel graphs are widely studied in electrical networks and are also researched in computational complexity theory.  Thick cycles have already proved useful in studying this wider class of graphs \cite{noble+royle}. In addition, thick cycle graphs will be of even more use in studying bilinear forms, now that a general formula for their sandpile groups has been computed.

\begin{acknowledgements}
This work was conducted during the 2016 Mathematical Sciences Research Institute Undergraduate Program (MSRI-UP), which is supported by the National Science Foundation (grant No. DMS-1156499) and the National Security Agency (grant No. H98230-116-1-0033). Special thanks to on-site program director Dr. Suzanne Weekes of Worcester Polytechnic Institute.  
\end{acknowledgements}




%
%

\end{document}